\documentclass[12pt,reqno]{amsart}
\usepackage{amssymb,amsmath,amsthm}
\usepackage{a4}
\usepackage{graphicx}
\usepackage{rotating}
\usepackage{epstopdf, epsfig}
\usepackage{hyperref}
\usepackage{color}
\begin{document}
\newtheorem{theorem}{Theorem}
\newtheorem{corollary}[theorem]{Corollary}
\newtheorem{lemma}[theorem]{Lemma}
\newtheorem*{example}{Example}
\theoremstyle{definition}
\newtheorem*{definition}{Definition}
\theoremstyle{remark}
\newtheorem{remark}{Remark}
\title[Pythagoras, Binomial, and de Moivre]{\bf Pythagoras, Binomial, and de Moivre Revisited Through Differential Equations}
\markright{}
\author{Jitender Singh$^1$} 
\author{Renu Bajaj$^2$}
\address[1]{Department of Mathematics, Guru Nanak Dev University Amritsar, INDIA; {\tt sonumaths@gmail.com}}
\address[2]{Center for Advanced Study in Mathematics, Panjab University Chandigarh, INDIA; \tt rbjaj@pu.ac.in}
\date{}
\maketitle
\begin{abstract}
\noindent The classical Pythagoras theorem, binomial theorem, de Moivre's formula, and numerous other deductions are made using the uniqueness theorem for the initial value problems in linear ordinary differential equations.
\end{abstract}
\parindent=0cm
\section{Background}
The Pythagoras, binomial, and de Moivre theorems are among the most important formulas in mathematics with wide applications. Hundreds of algebraic, geometric, and dynamic proofs of the more than twenty five hundred years old pythagoras theorem are available in the literature (see the excellent review by Loomi \cite{loomi} and also a  collection of over hundred proofs of Pythagoras theorem available online at \url{https://www.cut-the-knot.org/pythagoras/index.shtml}  \cite{cut}), and discovery of its new proofs still continues. As an evidence to this, we mention that Hirschhorn \cite{hirschhorn} and Luzia \cite{luzia} use geometric approach, Zimba \cite{zimba} provides a beautiful trigonometric proof, while Lengv\'arszky \cite{Z} uses integration to prove the Pythagoras theorem.
The classical proofs of the binomial theorem \cite{lanzo,coolidge,fulton} and de Moivre's formula \cite[Ch.~8,~pp.~106--107]{euler}, \cite{lefschetz} using mathematical induction are also well known. In fact several other proofs of the binomial theorem can be found in the literature using combinatorial \cite{weiner}, probabilistic \cite{probab}, and calculus approaches \cite{hwang,singh17}. However, the proofs using ordinary differential equations are not known much. For instance, Staring \cite{staring} settles the Pythagoras theorem by solving a first order nonlinear ordinary differential equation. The binomial theorem is also proved in Ungar \cite[pp.~874,~Eq.~6]{unger} using an addition theorem for linear ordinary differential equations. Recently, Singh \cite{singh} has obtained de Moivre formula using calculus. His proof is based on the fact that if derivative of a complex valued function of a real variable vanishes identically on the  real line, then the function is constant. In this paper we have proved these results using uniqueness theorem for linear initial value problems.

The uniqueness theorem  for the solution of an initial value problem in a system of first order linear differential equations is well known (see \cite[~pp.~229,~Th.~7.4]{coddington}). An equivalent form for $m$th order linear problem is as follows: given a positive integer $m$; $a_1(t)$, $\ldots$, $a_m(t)$, and $b(t)$ as continuous complex valued functions of
the real variable $t$ on an open interval $I$ (either bounded or unbounded) containing a  real number $t_0$,  the $m$th order linear initial value problem
 \begin{equation}\label{ee1}
 \begin{split}
    y^{(m)}(t)&+a_1(t)y^{(m-1)}(t)+\ldots +a_m(t)y(t)=b(t),\\
    y(t_0)&=y_0,~y'(t_0)=y_1,\ldots,y^{(m-1)}(t_0)=y_{m-1}
    \end{split}
\end{equation}
has a unique solution on the interval $I$.
\section{Pythagoras, binomial, and de Moivre}
To establish Pythagoras, binomial, and de Moivre theorems, we use the aforementioned uniqueness of the solution of a first order ($m=1$) initial value problem in linear ordinary differential equations.
\begin{figure}[h!t!b!]
\centering
\includegraphics[width=0.8\textwidth]{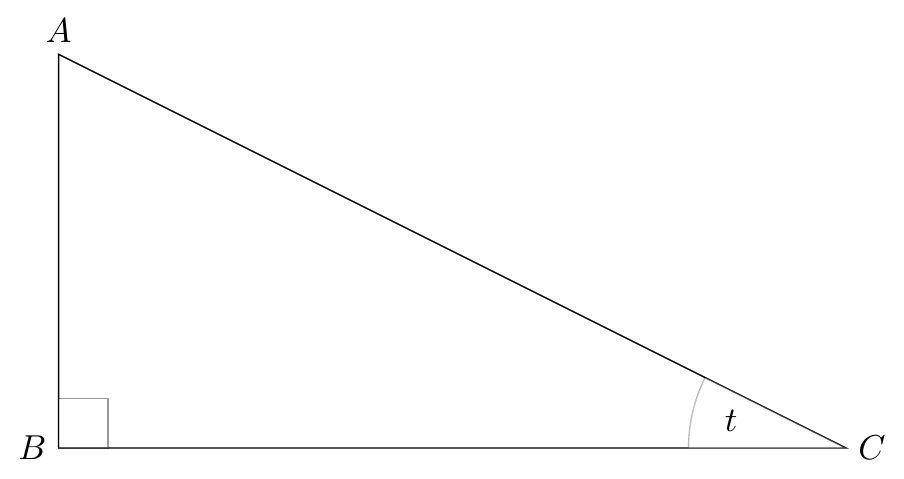}
\caption{Diagram showing the triangle $ABC$ with the angle $ACB=t$ and the angle $ABC=\pi/2$ (in radians).}\label{F1}
\end{figure}
\begin{theorem}[Pythagoras]
Let $ABC$ be the triangle, right angled at its vertex $B$ as shown in Fig.\ \ref{F1}.
If $|AB|$, $|BC|$, and $|AC|$ denote the lengths of the sides $AB$, $BC$, and $AC$ of the triangle $ABC$, respectively, then
\begin{equation}\label{h71}
   |AB|^2+|BC|^2=|AC|^2.
\end{equation}
\end{theorem}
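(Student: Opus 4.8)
The plan is to embed the given right triangle into a one-parameter family obtained by letting the angle $t = \angle ACB$ vary over $(0,\pi/2)$ while keeping the hypotenuse length $|AC| = c$ fixed, and then to show that the quantity $|AB|^2 + |BC|^2$ does not depend on $t$. Writing $f(t) = |AB|$ and $g(t) = |BC|$ for the two legs regarded as functions of $t$, the first task is to establish the geometric differential relations
\[
  f'(t) = g(t), \qquad g'(t) = -f(t).
\]
These should come from rotating the segment $CA$ about $C$ through an infinitesimal angle: the displacement of $A$ is perpendicular to $CA$ and, by the radian convention for arc length, has magnitude $c\,dt$, so the small triangle formed by this displacement together with its horizontal and vertical components is a copy of triangle $ABC$ scaled by $dt$ and turned through $\pi/2$. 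Matching corresponding legs (with the signs fixed by the sense of rotation, so that $|AB|$ increases and $|BC|$ decreases as $t$ grows) yields the two relations above.

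With these in hand, set $u(t) = f(t)^2 + g(t)^2$. Differentiating and substituting gives
\[
  u'(t) = 2f(t)f'(t) + 2g(t)g'(t) = 2f(t)g(t) - 2g(t)f(t) = 0,
\]
so $u$ satisfies the first-order linear equation $y' = 0$ on $(0,\pi/2)$. This is precisely \eqref{ee1} with $m = 1$, $a_1 \equiv 0$, and $b \equiv 0$: by the uniqueness theorem quoted above, the only solutions of $y' = 0$ on this interval are the constants, whence $u$ is constant. To evaluate that constant I would pass to the degenerate configuration $t \to 0^+$, in which the leg $AB$ shrinks to a point, $B$ merges with $A$, and therefore $f \to 0$ and $g \to |AC| = c$; continuity then forces $u \equiv c^2 = |AC|^2$. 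Evaluating at the angle of the original triangle delivers \eqref{h71}.

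The main obstacle I anticipate is the very first step, namely deriving $f' = g$ and $g' = -f$ without circularity. The naive route — computing the length of the displacement of $A$ as $\sqrt{(dx)^2 + (dy)^2}$ — already invokes the Euclidean distance formula, i.e.\ the Pythagorean theorem one is trying to prove, so the argument must instead rest on the radian/arc-length convention and on a similar-triangles (equivalently, rotation-by-$\pi/2$) comparison that never reconstructs a hypotenuse from coordinates. Carrying out this comparison cleanly, and pinning down the correct signs, is where the care lies; once the two derivative relations are honestly in place, the reduction to $u' = 0$ and the $m=1$ uniqueness theorem close the argument at once.
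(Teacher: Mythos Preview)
Your plan is correct and, at its core, coincides with the paper's proof: both arguments amount to showing that the function $u(t)=f(t)^2+g(t)^2$ (with $f=c\sin t$, $g=c\cos t$ in the paper's notation) satisfies the first-order problem $u'=0$ and then invoking the $m=1$ uniqueness theorem to conclude that $u$ is constant, equal to $|AC|^2$.

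The difference lies only in how the key differential input is obtained. The paper simply takes the derivative formulas $(\sin t)'=\cos t$ and $(\cos t)'=-\sin t$ as given, computes $w(t)=\cos^2 t+\sin^2 t-1$, checks $w'(t)=0$ and $w(0)=0$, and concludes $w\equiv 0$ by uniqueness; the passage to side lengths is then the one-line substitution $\sin t=|AB|/|AC|$, $\cos t=|BC|/|AC|$. Your version re-derives exactly those derivative relations in the disguised form $f'=g$, $g'=-f$ via the infinitesimal-rotation/similar-triangles picture, precisely in order to avoid any appeal to analytic facts about $\sin$ and $\cos$ whose standard proofs might themselves rely on Pythagoras. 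What this buys you is a more self-contained argument that addresses the circularity worry head-on; what it costs is the extra geometric work you flag as the main obstacle. The paper's version is shorter but silently assumes the trigonometric derivatives, so whether it is genuinely non-circular depends on one's foundations for $\sin$ and $\cos$. Your limit evaluation at $t\to 0^{+}$ plays the role of the paper's initial condition $w(0)=0$; either device fixes the constant.
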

\begin{proof}
Observe that the first order homogenous initial value problem
\begin{equation}\label{c1}
  w'(t)=0,w(0)=0
\end{equation}
is satisfied by $w(t)=\cos^2 t+\sin^2 t-1$ as well as the zero function. By uniqueness of the solution, $w(t)=0$ for all $t$, which proves that
    \begin{equation}\label{h70}
        \cos^2 t+\sin^2t=1,
    \end{equation}
for all real $t$. Now if we take $t$ as the angle $ACB$ in the triangle $ABC$, we have $\sin t=|AB|/|AC|$ and $\cos t=|BC|/|AC|$. Using these in \eqref{h70}, we get  \eqref{h71}.
\end{proof}
\begin{theorem}[Binomial]\label{th3}
For a positive integer $n$ and real $t$, the following holds.
\begin{equation}\label{binom}
(1+t)^n=1+\sum_{k=1}^n\frac{n!}{k!(n-k)!}t^k.
\end{equation}
\end{theorem}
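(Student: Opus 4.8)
The plan is to mirror the Pythagoras argument: exhibit a first-order linear initial value problem solved both by $(1+t)^n$ and by the right-hand side of \eqref{binom}, and then invoke the uniqueness theorem from \eqref{ee1}. First I would set $y(t)=(1+t)^n$ and observe that, on the open interval $I=(-1,\infty)$ containing $t_0=0$, it satisfies the homogeneous problem
\begin{equation*}
y'(t)-\frac{n}{1+t}\,y(t)=0,\qquad y(0)=1,
\end{equation*}
since $y'(t)=n(1+t)^{n-1}=\tfrac{n}{1+t}\,y(t)$ and $y(0)=1$. Here the coefficient $a_1(t)=-n/(1+t)$ is continuous on $I$, so the hypotheses of the uniqueness theorem are met.

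Next I would show that the candidate $p(t)=\sum_{k=0}^{n}\binom{n}{k}t^k$ solves the very same problem. Clearly $p(0)=\binom{n}{0}=1$, so it remains to verify that $(1+t)p'(t)=n\,p(t)$ holds identically. Writing $(1+t)p'(t)=p'(t)+tp'(t)$ and comparing the coefficient of $t^k$ on the two sides reduces the claim to the purely combinatorial identity
\begin{equation*}
(k+1)\binom{n}{k+1}+k\binom{n}{k}=n\binom{n}{k},\qquad 0\le k\le n,
\end{equation*}
which in turn follows from the absorption relation $(k+1)\binom{n}{k+1}=(n-k)\binom{n}{k}$. This step --- checking that the proposed series is annihilated by the differential operator --- is where I expect the real work to lie; everything else is bookkeeping.

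Finally, with both functions solving the same first-order linear initial value problem on $I$, the uniqueness theorem forces $(1+t)^n=p(t)$ for every $t\in(-1,\infty)$. Since both sides are polynomials in $t$ agreeing on the infinite set $I$, they must coincide as polynomials, so \eqref{binom} holds for all real $t$, not merely for $t>-1$. I would flag this last extension explicitly, because the differential-equations argument only delivers the identity on the interval where the coefficient $n/(1+t)$ is continuous.
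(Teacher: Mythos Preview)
Your proof is correct and uses the same core idea as the paper: both set up the first-order linear problem $y'(t)-\dfrac{n}{1+t}\,y(t)=0$ on $(-1,\infty)$ and appeal to uniqueness. The paper works with the \emph{difference} $\psi(t)=(1+t)^n-p(t)$ and the initial condition $y(0)=0$, while you show $(1+t)^n$ and $p(t)$ separately satisfy the problem with $y(0)=1$; these formulations are trivially equivalent. Two points of execution differ and are worth noting. First, to check that the polynomial obeys the ODE, the paper performs an explicit chain of sum manipulations, whereas you compare coefficients of $t^k$ and reduce to the absorption identity $(k+1)\binom{n}{k+1}=(n-k)\binom{n}{k}$; your route is shorter and more transparent. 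Second, to extend the identity from $t>-1$ to all real $t$, the paper argues via a limit at $t=-1$ and the substitution $t\mapsto 1/t$ for $t<-1$, while you simply observe that two polynomials agreeing on an infinite set must coincide identically --- again the more economical option.
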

\begin{proof}
Observe that if \eqref{binom} holds for all $t>-1$ then taking limit as $t\rightarrow -1^+$ on both sides of \eqref{binom}, we get the result for $t=-1$ as well.
Also, if \eqref{binom} holds for all $t>-1$ then it holds for all $t<-1$ since if that is the case then we can write  $(1+t)^n=t^{n}(1+\frac{1}{t})^{n}=t^n\sum_{k=0}^n\frac{n!}{k!(n-k)!}\frac{1}{t^k}=\sum_{k=0}^n\frac{n!}{k!(n-k)!}t^{n-k}=\sum_{k=0}^n\frac{n!}{(n-k)!k!}t^{k}$.
Thus, it is sufficient to prove \eqref{binom} for $t>-1$. So, consider the   initial value problem
\begin{equation}\label{p3}
    y'(t)-\frac{n}{1+t}y(t)=0,~y(0)=0,~t>-1,
\end{equation}
which has the zero function as its unique solution for $t>-1$.
Letting $\psi(t)=(1+t)^n-1-\sum_{k=1}^n\frac{n!}{k!(n-k)!}t^k$ for $t>-1$, differentiating it with respect to $t$, and multiplying throughout by $(1+t)$, we get
  \begin{equation*}\label{p4}
\begin{split}
    (1+t)\psi'(t)&=n(1+t)^{n}-(1+t)\sum_{k=1}^n\frac{n!}{k!(n-k)!}kt^{k-1}\\
               &=n(1+t)^{n}-\sum_{k=1}^n\frac{n!}{(k-1)!(n-k)!}(t^{k-1}+t^k)\\
               &=n(1+t)^{n}-n-\sum_{k=2}^n\frac{n!t^{k-1}}{(k-1)!(n-k)!}-\sum_{k=1}^n\frac{n!t^{k}}{(k-1)!(n-k)!}\\
               &=n(1+t)^{n}-n-\sum_{k=1}^{n-1}\frac{n!}{k!(n-k-1)!}t^{k}-\sum_{k=1}^n\frac{n!}{(k-1)!(n-k)!}t^{k}\\
               &=n(1+t)^{n}-n-\sum_{k=1}^{n-1}\frac{n!}{(k-1)!(n-k-1)!}\Bigl(\frac{1}{k}+\frac{1}{n-k}\Bigr)t^{k}-nt^n\\
               &=n\Bigl\{(1+t)^{n}-1-\sum_{k=1}^{n-1}\frac{n!}{k!(n-k)!}t^{k}-t^n\Bigr\}\\
               &=n\Bigl\{(1+t)^{n}-1-\sum_{k=1}^{n}\frac{n!}{k!(n-k)!}t^{k}\Bigr\}=n\psi(t),
    \end{split}
\end{equation*}
which together with the observation that $\psi(0)=0$ shows that $\psi(t)$ satisfies \eqref{p3}. By uniqueness of the solution, we have
$\psi(t)=0$, as desired.
\end{proof}
\begin{theorem}[de Moivre]\label{th2}
For any integer $n$ and a real number $t$, $(\cos{t} + i \sin{t})^n=\cos{nt} +i\sin{nt},$ where $i=\sqrt{-1}$.
\end{theorem}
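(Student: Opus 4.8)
The plan is to reduce the identity to a first-order linear homogeneous initial value problem of the kind governed by the uniqueness theorem with $m=1$, exactly in the spirit of the proofs of Pythagoras and the binomial theorem. The engine of the whole argument is a single differentiation fact: writing $g(t)=\cos t+i\sin t$, one has $g'(t)=-\sin t+i\cos t=i(\cos t+i\sin t)=ig(t)$, so $g$ satisfies $y'=iy$. Everything else is assembled around this observation.

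Next I would dispose of the case $n=0$ (both sides equal $1$) and then treat all nonzero integers $n$ uniformly. What makes a uniform treatment possible is that, by the already-established Pythagoras identity \eqref{h70}, $|g(t)|^2=\cos^2 t+\sin^2 t=1$, so $g(t)\neq 0$ for every real $t$; consequently $u(t)=(\cos t+i\sin t)^n=g(t)^n$ is well defined and differentiable on all of $\mathbb{R}$ for every integer $n$, including the negative ones. Applying the chain rule to the complex-valued function $g(t)^n$ of the real variable $t$ yields $u'(t)=n\,g(t)^{n-1}g'(t)=in\,g(t)^n=in\,u(t)$.

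I would then set $\psi(t)=(\cos t+i\sin t)^n-(\cos nt+i\sin nt)$ and verify that it solves the homogeneous problem $y'(t)-in\,y(t)=0$, $y(0)=0$ on the interval $I=\mathbb{R}$, whose coefficient $a_1\equiv -in$ is a continuous (indeed constant) complex-valued function, so the hypotheses of the uniqueness theorem are met. The initial condition is immediate since $\psi(0)=1-1=0$, and differentiating the second term gives $(\cos nt+i\sin nt)'=in(\cos nt+i\sin nt)$, so that $\psi'(t)=in\,\psi(t)$ by the computation above. Since the zero function also solves this problem, uniqueness of the solution of \eqref{ee1} with $m=1$ forces $\psi\equiv 0$, which is precisely de Moivre's formula.

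The only genuine subtlety I anticipate is the extension to negative $n$: the step $\frac{d}{dt}g(t)^n=n\,g(t)^{n-1}g'(t)$ presupposes that $g(t)$ never vanishes, and this is supplied exactly by the Pythagoras identity proved above, a pleasant internal reuse of the paper's own results. The positive-integer derivative rule for a complex-valued function of a real variable follows from the product rule by induction, and the reciprocal case $\frac{d}{dt}g(t)^{-1}=-g(t)^{-2}g'(t)$ then extends it to negative exponents; once this differentiation is justified, the initial-value-problem argument closes the proof for every integer $n$ at once.
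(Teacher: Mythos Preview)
Your proof is correct and follows essentially the same route as the paper: both define the difference $\varphi(t)=(\cos t+i\sin t)^n-(\cos nt+i\sin nt)$, verify that it solves the initial value problem $y'-iny=0$, $y(0)=0$, and invoke uniqueness to conclude $\varphi\equiv 0$. Your treatment is slightly more careful than the paper's in explicitly justifying the differentiation of $g(t)^n$ for negative $n$ via the nonvanishing of $g$ (using the Pythagoras identity), a point the paper leaves implicit.
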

\begin{proof}
The zero function ($y(t)=0$) satisfies  the initial value problem
\begin{equation}\label{p1}
    y'(t)-i n y(t)=0,~y(0)=0.
\end{equation}
Letting $\varphi(t)=(\cos{t} + i \sin{t})^n-\cos{nt}-i\sin{nt}$, differentiating it with respect to $t$,  and using the product rule of derivatives therein, we get
\begin{equation}\label{p2}
\begin{split}
    \varphi'(t)&=n(\cos{t} + i \sin{t})^{n-1}(-\sin t+i\cos t)+n\sin{nt}-in\cos{nt}\\
               &=in (\cos{t} + i \sin{t})^{n}-in (\cos {nt}+i\sin{nt})=in \varphi(t).
    \end{split}
\end{equation}
From \eqref{p2} and the fact that $\varphi(0)=0$, it follows that $\varphi$ is also a solution of the initial value problem \eqref{p1}. By uniqueness of the solution, $\varphi(t)=0$ for all $t$, that is,  $(\cos{t} + i \sin{t})^n-\cos{nt}-i\sin{nt}=0$, which proves the assertion.
\end{proof}
\begin{remark}
The proofs of binomial and de Moivre formulas in \cite{singh17} and \cite{singh}, respectively, can be translated to fit in the present study. More precisely, both the functions
$$y_1(t)=\frac{1}{(1+t)^n}+\sum_{k=1}^n\frac{n!}{k!(n-k)!}\frac{t^k}{(1+t)^n},~t>-1$$ as well as $$y_2(t)=(\cos t+i\sin t)^{-n}(\cos nt+i\sin nt)$$ satisfy the same initial value problem $$y'(t)=0,~y(0)=1,$$ which at once prove using the uniqueness that $y_1(t)=1$ for $t>-1$ and $y_2(t)=1$ for all $t$. The case $y_1(t)=1$ for $t\leq -1$ can be treated in the same way as in the proof of Theorem \ref{th3}.
\end{remark}
Numerous other applications of the uniqueness theorem can be obtained, such as the following ones, which we leave to the reader for further exploration.
 \section{Euler's formula} The complex exponential $e^{it}$ of a real variable $t$ can be defined as the unique solution of the initial value problem $y'-iy=0;~y(0)=1$. Since the function $\cos t+i\sin t$ also satisfies this initial value problem, it follows by uniqueness that
    \begin{equation}
        e^{it}=\cos t+i\sin t.
    \end{equation}
\section{Trigonometric identities} For any real numbers $c$ and $t$, both the functions $y_1(t)=\sin (t+c)$ as well as  $y_2(t)=\sin t\cos c+\cos t\sin c$ satisfy the second order initial value problem
    \begin{equation*}
        y''(t)+y(t)=0;~y(0)=\sin c,~y'(0)=\cos c.
    \end{equation*}
    By uniqueness, $y_1(t)=y_2(t)$ for all $t$, which proves that
    \begin{equation}
        \sin(t+c)=\sin t\cos c+\cos c \sin t.
    \end{equation}
    Many other identities about the trigonometric functions can be proved the same way after formulating appropriate initial value problems for them.
\section{Sum of geometric progression}
    The initial value problem $$y'+\frac{1}{t-1}y=\frac{nt^{n-1}}{t-1};~y(0)=1,~t<1$$ is satisfied by both the functions $y_1(t)=1+\sum_{k=1}^{n-1}t^k$ as well as $y_2(t)=\frac{1-t^n}{1-t}$. By uniqueness, $y_1(t)=y_2(t)$ for $t<1$. For $t>1$,  $\frac{1}{t}<1$;  so we have
    $1+\sum_{k=1}^{n-1}t^k=$ $t^{n-1}y_1(1/t)=$ $t^{n-1}y_2(1/t)=\frac{1-t^n}{1-t}$. Thus, we have
    \begin{equation}\label{m1}
     1+\sum_{k=1}^{n-1}t^k=\frac{1-t^n}{1-t}~\text{for all}~ t\neq 1.
    \end{equation}
    \begin{remark}\label{r1}
        Differentiating both sides of \eqref{m1} with respect to $t$ and multiplying by $t$, we get $\sum_{k=1}^{n}(k-1)t^{k-1}=t\frac{d}{dt}\Bigl(\frac{1-t^n}{1-t}\Bigr)$ for all $t\neq 1$.
    \end{remark}
\section{Sum of arithmetic-geometric progression}
 Given three real numbers $a$, $t$, and $r\neq 1$, the formula for the sum of arithmetic-geometric progression
       \begin{equation}\label{h1}
       \sum_{k=1}^{n}\{a+(k-1)t\}r^{k-1}=a\frac{1-r^n}{1-r}+tr\frac{1-nr^{n-1}+(n-1)r^{n}}{(1-r)^2},
       \end{equation}
     can be verified through the initial value problem
      \begin{equation*}\label{h12}
        y''(t)=0;~y(0)=a\frac{1-r^n}{1-r},~y'(0)=r\frac{d}{dr}\Bigl(\frac{1-r^n}{1-r}\Bigr),~r\neq 1,
      \end{equation*}
 which is satisfied by each of the left hand side and right hand side of \eqref{h1} for all $t$ (see Remark \ref{r1} for the initial condition $y'(0)$).

 Taking limit as $r\rightarrow 1$ on both sides of \eqref{h1} and solving the limit in the right hand side using L'Hospital rule, we get
 \begin{equation}
    \sum_{k=1}^{n}\{a+(k-1)t\}=\frac{n}{2}\{2a+(n-1)t\},
 \end{equation}
    which provides an alternative proof of the formula for the sum of an arithmetic progression to $n$ terms, which otherwise is often done using the famous Gauss's clever trick.
       \section{Sum of sine and cosine series} Given a real number $a$ and $0<t<2\pi$, one can verify the formulas
            \begin{equation}\label{h2}
            \begin{split}
            \sum_{k=0}^{n-1}\cos(a+kt)&=\frac{\cos\{a+(n-1)t/2\}\sin\{nt/2\}}{\sin (t/2)},\\
       \sum_{k=0}^{n-1}\sin(a+kt)&=\frac{\sin\{a+(n-1)t/2\}\sin\{nt/2\}}{\sin (t/2)}
       \end{split}
       \end{equation}
       as follows.  Here, both the functions $y_1(t)=e^{ia}\sum_{k=1}^{n}e^{i(k-1)t}$ as well as $y_2=e^{ia}\frac{e^{int}-1}{e^{it}-1}$ satisfy the initial value problem
       \begin{equation*}\label{h6}
        y'+\frac{ie^{it}}{e^{it}-1}y=\frac{ie^{i(a+n)t}}{e^{it}-1},~y(\pi)=e^{ia}\frac{1-(-1)^{n}}{2},~0<t<2\pi.
       \end{equation*}
       By uniqueness, $y_1(t)=y_2(t)$ for all $0<t<2\pi$, which on comparing real and imaginary parts recover the two formulas in \eqref{h2}.

The present approach  highlights the importance of the uniqueness theorem of initial value problems of linear ordinary differential equations in  obtaining classical results such as the present ones. The uniqueness theorem may therefore be introduced to the students at schools and colleges at the level, where the fundamental theorem of calculus and differential equations are introduced to them.
The present technique may be useful for stimulating the student who is familiar with ordinary differential equations. Such an enthusiast will enjoy these entertaining applications.


\end{document}